\documentclass[12pt, twoside]{article}
\usepackage{amsmath,amsthm,amssymb, graphicx}
\usepackage{times}
\usepackage{enumerate}

\pagestyle{myheadings}
\def\titlerunning#1{\gdef\titrun{#1}}
\makeatletter
\def\author#1{\gdef\autrun{\def\and{\unskip, }#1}\gdef\@author{#1}}
\def\address#1{{\def\and{\\\hspace*{18pt}}\renewcommand{\thefootnote}{}%
\footnote {#1}}%
\markboth{\autrun}{\titrun}}
\makeatother
\def\email#1{e-mail: #1}
\def\subjclass#1{{\renewcommand{\thefootnote}{}%
\footnote{\emph{Mathematics Subject Classification (2010):} #1}}}
\def\keywords#1{\par\medskip
\noindent\textbf{Keywords.} #1}


\newtheorem{theorem}{Theorem}[section]

\newtheorem{lemma}[theorem]{Lemma}
\newtheorem{proposition}[theorem]{Proposition}
\theoremstyle{remark}
\newtheorem{definition}[theorem]{Definition}
\newtheorem{example}[theorem]{Example}

\newtheorem{question}[theorem]{Question}

\numberwithin{equation}{section}

\frenchspacing

\textwidth=15cm
\textheight=23cm
\parindent=16pt
\oddsidemargin=-0.5cm
\evensidemargin=-0.5cm
\topmargin=-0.5cm

\newcommand{\CC}{\mathbb{C}}
\newcommand{\DD}{\mathbb{D}}
\newcommand{\HH}{\mathbb{H}}
\newcommand{\RR}{\mathbb{R}}
\newcommand{\TT}{\mathbb{T}}
\DeclareMathOperator{\dist}{dist}
\DeclareMathOperator{\Lip}{Lip}
\DeclareMathOperator{\re}{Re}
\DeclareMathOperator{\im}{Im}
\DeclareMathOperator{\diam}{diam}
\DeclareMathOperator{\sgn}{sign}


\begin{document}


\baselineskip=17pt


\titlerunning{Optimal extension of Lipschitz embeddings in the plane}

\title{Optimal extension of Lipschitz embeddings in the plane}

\author{Leonid V. Kovalev}

\date{\today}

\maketitle

\address{L. V. Kovalev: 215 Carnegie, Mathematics Department, Syracuse University, Syracuse, NY 13244, USA; \email{lvkovale@syr.edu}}

\subjclass{Primary 30C35; Secondary 26B35, 30L05, 31A15}


\begin{abstract} 
We prove that every bi-Lipschitz embedding of the unit circle into the plane can be extended to a bi-Lipschitz map of the unit disk with linear bounds on the constants involved. This answers a question raised by Daneri and Pratelli. Furthermore, every Lipschitz embedding of the circle extends to a Lipschitz homeomorphism of the plane, again with a linear bound on the constant.   

\keywords{Lipschitz maps, homeomorphic extension, bi-Lipschitz extension, conformal maps, harmonic measure}
\end{abstract}

\section{Introduction}

Every simple closed rectifiable curve in the plane $\RR^2$ is the image of an injective Lipschitz map $f$ defined of the unit circle $\TT$, for example via the constant speed parameterization. Two classical extension theorems apply to such $f$: the Schoenflies theorem~\cite[Theorem 10.4]{Moise} says that $f$ extends to a homeomorphism $F\colon \RR^2\to\RR^2$, while the Kirszbraun theorem~\cite[Theorem 1.34]{BB} says that $f$ extends to a Lipschitz map $F\colon \RR^2\to\RR^2$. But is there an extension with both properties?

Despite the simplicity of the question, an answer to it is not found in the literature. The bi-Lipschitz extension problem, in which both $f$ and $f^{-1}$ are assumed Lipschitz continuous, has been studied by many authors~\cite{ATV, ATV2, AS, DP, JK, KVW, Lat2, Mac, T, Tu, Tu2}. A common theme in the existing results is that the Lipschitz constant $\Lip(F)$  depends on $\Lip(f^{-1})$ as well as on $\Lip(f)$, and therefore no conclusion is reached if $f^{-1}$ is not Lipschitz. The curves that admit bi-Lipschitz parameterization by $\TT$ are precisely the chord-arc curves~\cite[Theorem 7.9]{Pomm}, and the chord-arc property is much stronger than rectifiability. For example, cardioids and astroids do not have this property.

Even when the embedding $f$ is bi-Lipschitz, there remains the question of controlling the Lipschitz constants of its extension in the optimal way, which is $\Lip(F)\le C\Lip(f)$ and $\Lip(F^{-1})\le C\Lip(f^{-1})$ with $C$ independent of $f$. The problem of extending an embedding $f\colon \TT\to\CC$ to a map of the unit disk $\DD$ in this way was posed by Daneri and Pratelli in~\cite{DP}. Their paper was followed by a number of quantitative extension theorems~\cite{Ka, Ka2, Ko12, Ko18, Radici} but the problem remained open until now.  

Our main result, Theorem~\ref{main-theorem}, solves the problem posed by Daneri and Pratelli, as well as the aforementioned problem of extending a Lipschitz (not necessarily bi-Lipschitz) embedding to a Lipschitz homeomorphism.  

\begin{theorem}\label{main-theorem}
Suppose $f\colon\TT\to\CC$ is an embedding such that 
\begin{equation}\label{BLass}
\ell|a-b|\le  |f(a)-f(b)| \le L |a-b|,\quad a,b \in\TT 
\end{equation}
for some $\ell, L\in [0, \infty]$. Then $f$ extends to a homeomorphism $F\colon\CC\to\CC$ that  satisfies 
\begin{equation}\label{main-conclusion}
10^{-14} \ell|a-b|\le  |F(a)-F(b)| \le 2\cdot 10^{14} L |a-b|,\quad a,b \in\overline{\DD},
\end{equation}
as well as
\begin{equation}\label{main-conclusion2}
10^{-25} \frac{\ell^2}{L} |a-b|\le  |F(a)-F(b)| \le 10^{28} L |a-b|,\quad a,b \in \CC. 
\end{equation}
\end{theorem}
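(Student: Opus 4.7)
Write $\Gamma = f(\TT)$ and let $\Omega^+,\Omega^-$ denote the bounded and unbounded components of $\CC\setminus\Gamma$. Since $f$ is $L$-Lipschitz, the curve $\Gamma$ has length at most $2\pi L$ and is rectifiable, so the Riemann maps $\phi^+\colon\DD\to\Omega^+$ and $\phi^-\colon\CC\setminus\overline{\DD}\to\Omega^-$ extend continuously to the closed (punctured) disks by Carath\'eodory's theorem, and their boundary derivatives lie in the Hardy space $H^1$ by the F.\ and M.\ Riesz theorem; arc length on $\Gamma$ then pulls back to $|\phi^{\pm\prime}(e^{i\theta})|\,d\theta$ on $\TT$. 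The plan is to construct $F$ by gluing $\phi^+\circ H^+$ on $\overline{\DD}$ with $\phi^-\circ H^-$ on $\CC\setminus\DD$, where each $H^\pm$ is a bi-Lipschitz self-homeomorphism of the corresponding disk whose boundary trace equals the correspondence $h^\pm := (\phi^\pm)^{-1}\circ f \colon \TT\to\TT$. By construction, both pieces then agree with $f$ along $\TT$, so $F$ is automatically a homeomorphism of $\CC$.

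The real work is to choose $H^\pm$ so that $\phi^\pm\circ H^\pm$ is Lipschitz with the absolute linear dependence on $L$ required by \eqref{main-conclusion}. Since $|\phi^{\pm\prime}|$ is typically unbounded near $\TT$ (a cardioid already produces a cusp at which $|\phi^{+\prime}|$ blows up), the derivative of $H^\pm$ must compress the disk precisely in the angular regions where this blow-up occurs. The correct cancellation is encoded in the almost-everywhere identity $|(h^\pm)'(e^{i\theta})|\cdot|\phi^{\pm\prime}(h^\pm(e^{i\theta}))|=|f'(e^{i\theta})|\le L$, together with its dual $|((h^\pm)^{-1})'|\cdot|\phi^{\pm\prime}|\ge \ell$ in the bi-Lipschitz case. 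I would realize $H^\pm$ by a Whitney-type construction: decompose $\TT$ into dyadic subarcs, assign to each subarc a Whitney box in $\DD$, and choose the map of boxes so that each $\phi^\pm$-image is comparable in Euclidean size to the corresponding subarc of $\Gamma$. A partition-of-unity patching with transition functions respecting the hyperbolic scale should then give the desired extension, with constants that depend only on absolute multiples of $L$ (and of $\ell^{-1}$ in the bi-Lipschitz case).

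The principal obstacle lies at scales where $\Gamma$ has small openings --- cusps or arcs nearly touching themselves --- because there $|\phi^{\pm\prime}|$ blows up and standard boundary-value extensions (Beurling--Ahlfors, Douady--Earle) pick up a multiplicative factor that is exponential in the quasisymmetry constant of $h^\pm$. Such a factor would be catastrophic for the linear bound demanded by Daneri and Pratelli, since $h^\pm$ can have quasisymmetry constant as large as a power of $L/\ell$. Overcoming this seems to require genuine use of harmonic-measure estimates for rectifiable curves, matching harmonic measure from the two sides of $\Gamma$ on paired Whitney arcs, so that the constructions on $\overline{\DD}$ and on $\CC\setminus\DD$ can be welded along $\TT$ with absolute distortion. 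Once \eqref{main-conclusion} is established, the global estimate \eqref{main-conclusion2} should follow by combining the two pieces with the triangle inequality and the $1$-Lipschitz radial retraction onto $\overline{\DD}$; the quadratic factor $\ell^2/L$ that appears there reflects the loss incurred in this combination.
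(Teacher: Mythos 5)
Your outline lands on the right framework---factor $F$ through a Riemann map as $\Phi\circ H$ where $H$ extends the boundary correspondence $(\phi)^{-1}\circ f$, and use harmonic measure to control $|\Phi'|$---and you correctly diagnose that a uniform quasisymmetry-based extension of $h^\pm$ would be catastrophic. But what you have written is a plan, not a proof, and the step you defer ("a Whitney-type construction \dots a partition-of-unity patching \dots should then give the desired extension") is precisely the hard part. The mechanism that actually makes the cancellation work in the paper is pointwise, not a.e.\ on the boundary: the Beurling--Ahlfors-type extension $\Psi$ of $\psi=f^{-1}\circ\phi$ admits derivative bounds at each $z\in\DD$ in terms of distances and diameters of $\psi$-images of four explicit arcs at scale $\log(1/|z|)$ around $z/|z|$ (Proposition~\ref{homeodist}), while $|\Phi'(z)|$ is bounded above and below by the corresponding quantities for the $\phi$-images of the \emph{same} arcs (Lemma~\ref{intconf}, via the harmonic measure lower bounds of Lemma~\ref{lowerharm}); the hypothesis \eqref{BLass} then converts the ratio into $\ell$ and $L$ directly in the chain rule. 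Your a.e.\ identity $|(h^\pm)'|\cdot|\phi^{\pm\prime}\circ h^\pm|=|f'|$ is only the infinitesimal boundary shadow of this and does not by itself propagate into the disk; moreover your appeal to $H^1$ boundary derivatives is fragile when $f$ is merely an embedding ($L=\infty$ is allowed). You also omit an essential normalization: one must precompose $\Phi$ with a M\"obius transformation so that $\psi$ does not send a short arc onto an arc with short complement (Lemma~\ref{normalize}); without this the arc-scale estimates degenerate. Finally, the lower bound for $\|DF^{-1}\|$ near the origin requires a separate argument producing a point $z_0$ with $|\Phi'(z_0)|\gtrsim\ell$, which your sketch does not address, and the passage from derivative bounds to the Lipschitz bound for $F^{-1}$ on the possibly nonconvex $\overline\Omega$ needs the boundary-segment argument.

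The second genuine problem is the exterior. Your plan to weld interior and exterior Whitney constructions ``with absolute distortion'' by matching harmonic measure from the two sides of $\Gamma$ cannot succeed: Example~\ref{keyhole} shows that no homeomorphic extension to the unbounded component can satisfy a lower Lipschitz bound linear in $\ell$, so the two sides cannot be matched with absolute constants. The paper does not attempt this; it simply invokes \cite[Theorem~1.1]{Ko18} for an exterior extension already satisfying \eqref{main-conclusion2} and glues it to the interior map along $\TT$, where both agree with $f$. Your closing suggestion that \eqref{main-conclusion2} follows from \eqref{main-conclusion} ``by the triangle inequality and the radial retraction'' is not a construction of the exterior homeomorphism at all---it presupposes one.
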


Note that $\ell=0$ is allowed in Theorem~\ref{main-theorem}, in which case $f^{-1}$ is not required to be Lipschitz. Similarly, $L=\infty$ is allowed, in which case $f$ is not required to be Lipschitz. 

Example~\ref{keyhole} shows that the factor in the lower bound in~\eqref{main-conclusion2} cannot be of the form $c\ell$ with a constant $c>0$; more generally it cannot be a positive quantity that depends on $\ell$ alone. 

The existence of an extension with the property~\eqref{main-conclusion2} has been proved in~\cite[Theorem~1.1]{Ko18}. The main achievement of the present paper is improving the lower bound for $|F(a)-F(b)|$ with $a,b\in \DD$ to the optimal form $c\ell|a-b|$ with a universal constant $c>0$. 

The  paper is organized as follows. Section~\ref{BAextsec} concerns the extension of suitably normalized circle homeomorphisms. Section~\ref{harmestsec} collects auxiliary estimates for harmonic measure and conformal maps, following~\cite{Ko18}. Theorem~\ref{main-theorem} is proved in \S\ref{sec-main-theorem-proof}. Finally, \S\ref{sec:examples} presents examples that show the estimates in Theorem~\ref{main-theorem} are in a certain way optimal. 

\emph{Notation and terminology}: $\TT$ is the unit circle in the plane, which is identified with the complex plane $\CC$. We write $|\gamma|$ for the length of an arc $\gamma\subset \TT$. The open unit disk is denoted by $\DD$. A map $f$ is \emph{Lipschitz} if the quantity
\[
\Lip(f) := \sup_{a\ne b}\frac{|f(a)-f(b)|}{|a-b|}, 
\]
called the Lipschitz constant of $f$, is finite. Such a map is \emph{bi-Lipschitz} if its inverse is Lipschitz as well. An \emph{embedding} is a map that is a homeomorphism onto its image. The notation $a\wedge b$ means $\min(a, b)$. The diameter $\diam A$ and distance $\dist(A, B)$ are always taken with respect to the Euclidean metric on $\RR^2$.

\section{Extension of a normalized circle homeomorphism}\label{BAextsec}

When working with a circle homeomorphism, we would like to prevent it from mapping a short arc onto an arc with short complement. The following definition makes this property precise. 

\begin{definition}\label{normalized-homeo}
A circle homeomorphism $\psi\colon \TT\to\TT$ is \emph{normalized} if for every arc $\gamma\subset \TT$ of length $2\pi/3$ its image $f(\gamma)$ has length at most $4\pi/3$. 
\end{definition}

Recall that a M\"obius transformation is a map of the form $\mu(z) = \nu (z-a)/(1-\bar a z)$ with $|a|<1$ and $\nu\in\TT$. Such $\mu$ leaves $\TT$ invariant. 

\begin{lemma}\label{normalize}
For every circle homeomorphism $\psi\colon \TT\to\TT$ there exists a M\"obius transformation $\mu\colon \TT\to\TT$ such that $\psi\circ \mu$ is normalized. 
\end{lemma}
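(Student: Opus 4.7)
The plan is to exploit the fact that Möbius self-maps of $\DD$ act simply transitively on ordered triples of distinct points of $\TT$ with a fixed cyclic orientation. Fix the three equally spaced trisection points $p_k := e^{2\pi i k/3}$ for $k = 0, 1, 2$, which partition $\TT$ into three arcs $A_0, A_1, A_2$ of length $2\pi/3$ each (with $A_k$ joining $p_k$ to $p_{k+1}$). The first step is to choose $\mu$ so that $\psi\circ\mu$ sends $\{p_0,p_1,p_2\}$ to itself as a set; the second step is to show that any such composition is automatically normalized.

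For the first step, if $\psi$ preserves cyclic orientation on $\TT$, then $\psi^{-1}(p_0), \psi^{-1}(p_1), \psi^{-1}(p_2)$ are cyclically ordered counterclockwise, and there is a unique Möbius $\mu$ with $\mu(p_k) = \psi^{-1}(p_k)$, giving $(\psi\circ\mu)(p_k) = p_k$. If $\psi$ reverses orientation, I would instead take $\mu$ satisfying $\mu(p_k) = \psi^{-1}(p_{-k \bmod 3})$, which is realizable by the same transitivity once the target triple has been re-ordered counterclockwise. In either case $\tilde\psi := \psi\circ\mu$ is a circle homeomorphism that permutes $\{p_0, p_1, p_2\}$.

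The key observation for the second step is that $\tilde\psi$ must also permute $\{A_0, A_1, A_2\}$ as sets: the sets $\tilde\psi(A_0), \tilde\psi(A_1), \tilde\psi(A_2)$ form a partition of $\TT$ into three arcs whose endpoints lie in $\{p_0,p_1,p_2\}$, and the only such partition is $\{A_0, A_1, A_2\}$ itself. In particular $|\tilde\psi(A_k)| = 2\pi/3$ for each $k$. Now any arc $\gamma\subset\TT$ with $|\gamma| = 2\pi/3$ meets at most two of the $A_k$: it either coincides with some $A_k$, lies entirely inside one, or contains exactly one trisection point in its interior (a third trisection point in $\gamma$ would force $|\gamma|>2\pi/3$). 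In every case $\gamma$ decomposes as $\gamma = \gamma_1 \cup \gamma_2$ with each $\gamma_i$ contained in some $A_k$, and hence $|\tilde\psi(\gamma)| \le 2\cdot 2\pi/3 = 4\pi/3$, as required.

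The only nontrivial point is handling the orientation-reversing case when choosing $\mu$; once that is in place, the remainder is a short combinatorial argument about how arcs of length $2\pi/3$ can interact with a fixed trisection.
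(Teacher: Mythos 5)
Your proposal is correct and follows essentially the same route as the paper: normalize by a M\"obius map sending the cube roots of unity into place, then observe that the composition permutes the three complementary arcs, so an arc of length $2\pi/3$ (which meets at most two of them) has image of length at most $4\pi/3$. Your extra care with the orientation-reversing case, and your phrasing of the final count as a two-piece decomposition rather than the paper's ``disjoint from one component'' argument, are only cosmetic differences.
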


\begin{proof} Since M\"obius transformations act transitively on the triples of points on $\TT$, we can choose $\mu$ so that the composition $\phi = \psi\circ \mu$ maps the set of the cubic roots of unity $C = \{e^{2\pi i k/3} \colon k=0, 1, 2\}$ onto itself. An arc $\gamma$  of length $2\pi/3$ is disjoint from at least one of the connected components of $\TT\setminus C$. Therefore, $\phi(\gamma)$ is also disjoint from one of these components, which implies   $|\phi(\gamma)|\le 4\pi/3$. 

Alternatively, one can choose $\mu$ so that the composition $\phi=\psi\circ\mu$ has mean zero; this is done in~\cite[p.~26]{DE}. If $|\phi(\gamma)| > 4\pi/3$, then there exists  $\lambda\in \TT$ such that $\re(\lambda w)>1/2$ for all  $w\in \TT\setminus \phi(\gamma)$. This means $\re (\lambda \phi)>1/2$ on $\TT\setminus\gamma$, which leads to a contradiction:  
\[
0 = \re \int_\TT \lambda \phi > \frac12|\TT\setminus \gamma| - |\gamma| >  \frac12 \frac{4\pi}{3} - \frac{2\pi}{3} = 0.
\qedhere \]
\end{proof}

To state the main result of this section, we need to introduce four circular arcs  that will appear throughout the paper. Given a point $z = re^{i\theta}$ with $0<r<1$, let $\delta = \log (1/r)$ and introduce four arcs of the unit circle $\TT$:  
\begin{equation}\label{gammacurves}
\begin{split}
&\gamma_1 = \{e^{it}\colon \theta-2\delta \le  t\le \theta - \delta \}, \\
&\gamma_2 = \{e^{it}\colon \theta-\delta \le  t\le \theta - \delta/2 \}, \\
&\gamma_3 = \{e^{it}\colon \theta+\delta/2 \le  t\le \theta + \delta \}, \\
&\gamma_4 = \{e^{it}\colon \theta+\delta \le  t\le \theta + 2\delta\}.
\end{split}
\end{equation}
These arcs may overlap, and even coincide with $\TT$ when $r$ is small. 

\begin{proposition} \label{homeodist} Let $\psi\colon \TT\to\TT$ be a normalized circle homeomorphism. There exists a homeomorphic extension $\Psi\colon \DD\to\DD$ which is also a diffeomorphism of $\DD\setminus\{0\}$ onto itself, and such that for every point $z\in\DD\setminus\{0\}$ the derivative matrix $D\Psi(z)$ satisfies
\begin{equation}\label{BAderest1}
\| D\Psi(z) \| \le  10^7\times 
\begin{cases}
\dfrac{\dist(\psi(\gamma_1), \psi(\gamma_4))}{\log(1/|z|)},\quad 
&e^{-\pi/6} < |z| < 1 \\ 
1, \quad &0< |z| \le e^{-\pi/6}
\end{cases}
\end{equation}
\begin{equation}\label{BAderest2}
\| D\Psi(z)^{-1} \| \le 
10^7\times \begin{cases}
\dfrac{\log(1/|z|)}{\diam \psi(\gamma_1) \wedge \diam \psi(\gamma_3)},\quad &e^{-4\pi} < |z| < 1 \\ 
1 ,\quad &0< |z| \le e^{-4\pi}
\end{cases}
\end{equation}
where the arcs $\gamma_j$ are as in~\eqref{gammacurves}.
\end{proposition}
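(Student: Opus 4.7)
The plan is to define $\Psi$ as a conformally natural extension of $\psi$; the Douady--Earle barycentric extension $E(\psi)$ is the canonical choice, but a modification of the Poisson integral works equally well since $\DD$ is convex and the Rad\'o--Kneser--Choquet theorem applies. Either extension is smooth on $\DD$ with boundary values $\psi$, and is a diffeomorphism of $\DD$ onto $\DD$. The normalization hypothesis on $\psi$ is used to arrange $\Psi(0)=0$ (which gives the ``$\DD\setminus\{0\}$ onto itself'' statement), and to guarantee that $\|D\Psi(0)\|$ and $\|D\Psi(0)^{-1}\|$ are bounded by universal constants. By smoothness and compactness, the constant-bound regimes $|z|\le e^{-\pi/6}$ in \eqref{BAderest1} and $|z|\le e^{-4\pi}$ in \eqref{BAderest2} then follow immediately.

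For $z\in\DD$ with $|z|$ close to $1$, I would reduce to the origin via conformal naturality. Let $w=\Psi(z)$ and choose M\"obius maps $M,N$ of $\DD$ with $M(0)=z$ and $N(0)=w$; set $\tilde\psi = N^{-1}\circ\psi\circ M$. Conformal naturality yields $\Psi\circ M = N\circ E(\tilde\psi)$, and since $|M'(0)| = 1-|z|^2$ and $|N'(0)| = 1-|w|^2$,
\[
\|D\Psi(z)\| \;=\; \frac{1-|w|^2}{1-|z|^2}\,\|DE(\tilde\psi)(0)\|.
\]
With $1-|z|^2\asymp 2\delta=2\log(1/|z|)$, the bound \eqref{BAderest1} reduces to showing $(1-|w|^2)\,\|DE(\tilde\psi)(0)\|\le C\,\dist(\psi(\gamma_1),\psi(\gamma_4))$. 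Under $M^{-1}$, each arc $\gamma_j$ of angular length $\sim\delta$ near $e^{i\theta}$ pulls back to an arc $\tilde\gamma_j\subset\TT$ of \emph{bounded} length sitting near a fixed reference point; under $N^{-1}$, the target-side arc $\psi(\gamma_j)$ pulls back with the M\"obius derivative on $\TT$ approximately equal to $1/(1-|w|^2)$ on the relevant portion of $\TT$, since $\psi(\gamma_j)$ clusters near $\psi(e^{i\theta})\approx w/|w|$. Thus $\dist(\tilde\psi(\tilde\gamma_1),\tilde\psi(\tilde\gamma_4))\asymp \dist(\psi(\gamma_1),\psi(\gamma_4))/(1-|w|^2)$, and the proposition reduces to the origin estimate $\|DE(\tilde\psi)(0)\|\le C\,\dist(\tilde\psi(\tilde\gamma_1),\tilde\psi(\tilde\gamma_4))$ for bounded-size arcs. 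This I would extract by differentiating the defining integral equation for $E(\tilde\psi)$ at $z=0$: at the origin the Poisson weight is the uniform probability measure on $\TT$, and the centering $E(\tilde\psi)(0)=0$ turns the equation into a solvable linear system whose kernel bounds come from elementary manipulations. The lower bound \eqref{BAderest2} proceeds in parallel: the diffeomorphism property gives $\det DE(\tilde\psi)(0)>0$, and the fact that $\tilde\psi(\tilde\gamma_1)$ and $\tilde\psi(\tilde\gamma_3)$ lie on opposite arcs of $\TT$ forces each of their diameters to contribute a lower bound on a singular value of $DE(\tilde\psi)(0)$.

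The main obstacle is quantitative control. One must verify that $\tilde\psi$ inherits from $\psi$---through the M\"obius conjugation---a usable analogue of the normalization of Definition~\ref{normalized-homeo}, so that the origin estimate is explicit and not merely qualitative; and then track numerical factors through the normalization lemma, the M\"obius conjugation on $\TT$, and the integral estimate at the origin in order to extract the constant $10^7$. Additional care is needed at the transition between the close-to-boundary and interior regimes in \eqref{BAderest1} and \eqref{BAderest2}, and in handling the possibility that the arcs $\gamma_j$ overlap or even equal $\TT$ when $\delta$ is large, so that the bound interpolates correctly with the trivial compact-set bound near the origin.
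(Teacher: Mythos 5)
Your route (Douady--Earle barycentric extension plus conformal naturality) is genuinely different from the paper's, which lifts $\psi$ to an increasing homeomorphism $\chi$ of $\RR$ via the exponential covering, applies a variant of the Beurling--Ahlfors extension with its explicit derivative bounds $\|D\chi_e\|\lesssim (\chi(x+y)-\chi(x-y))/y$ and the analogous lower bound, transfers back to $\DD$ by $\Psi(e^{iz})=\exp(i\chi_e(z))$, and then uses the normalization only to convert arc lengths of $\psi(\gamma_j)$ into chord distances, with the periodicity $\chi(t+2\pi)=\chi(t)+2\pi$ handling the regime $|z|\le e^{-4\pi}$. However, your sketch has several genuine gaps. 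First, the two constructions you offer as interchangeable are not: the Poisson/Rad\'o--Kneser--Choquet extension is not conformally natural (postcomposition with a M\"obius map destroys harmonicity), so the identity $\Psi\circ M=N\circ E(\tilde\psi)$ is available only for the Douady--Earle extension. Second, $\Psi(0)=0$ does not follow from Definition~\ref{normalized-homeo}: the barycentric extension fixes $0$ exactly when $\psi$ has mean zero, and the paper's arc-length normalization is implied by, but does not imply, mean zero (see the second proof of Lemma~\ref{normalize}). Third, the ``smoothness and compactness'' argument for the inner regimes cannot work: the bounds must be uniform over the entire (non-compact) family of normalized circle homeomorphisms, whose uniform limits need not be homeomorphisms, and no compactness argument produces the explicit constant $10^7$.

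The deeper problem is in the reduction to the origin. You need $\dist(\tilde\psi(\tilde\gamma_1),\tilde\psi(\tilde\gamma_4))\asymp \dist(\psi(\gamma_1),\psi(\gamma_4))/(1-|w|^2)$, which presupposes that $w=\Psi(z)$ lies at distance comparable to $\diam\psi(\gamma_j)$ from $\partial\DD$ near $\psi(e^{i\theta})$, and that $|{(N^{-1})}'|\approx (1-|w|^2)^{-1}$ on the arcs $\psi(\gamma_j)$. Locating $w$ relative to the boundary behaviour of $\psi$ at the correct scale is essentially the content of the proposition, so as written the argument is circular; moreover $\psi(\gamma_j)$ need not live at scale $1-|w|$ around $w/|w|$. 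Finally, the two origin estimates --- $\|DE(\tilde\psi)(0)\|\le C\,\dist(\tilde\psi(\tilde\gamma_1),\tilde\psi(\tilde\gamma_4))$ and the lower singular-value bound via $\diam\tilde\psi(\tilde\gamma_2)\wedge\diam\tilde\psi(\tilde\gamma_3)$ --- are asserted rather than proved, and they are the analytic heart of the matter; for general (non-quasisymmetric) homeomorphisms no off-the-shelf derivative bounds for $E(\cdot)$ of this form are available. A workable fix would be to abandon the barycentric route and follow the Beurling--Ahlfors construction, where the needed derivative bounds are elementary consequences of the explicit integral formula.
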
 

\begin{proof} We may and do assume $\psi$ is sense-preserving. Then $\psi$ lifts to an increasing homeomorphism $\chi$ of the real line onto itself, which satisfies $\psi (e^{it}) = e^{i \chi(t)}$ for all $t\in \RR$, and 
\begin{equation}\label{step2pi}
\chi(t+2\pi)=\chi(t)+2\pi. 
\end{equation}
Let us note two elementary consequences of~\eqref{step2pi}:
\begin{equation}\label{large-distance}
2\pi \left\lfloor \frac{t-s}{2\pi} \right\rfloor \le \chi(t)-\chi(s) \le 2\pi \left\lceil \frac{t-s}{2\pi} \right\rceil,
\quad s\le t;
\end{equation}
\begin{equation}\label{notfar}
|\chi(t)-\chi(0)-t| \le 2\pi, \quad t\in\RR.
\end{equation}

Let $\chi_e$ denote the following variant of the Beurling-Ahlfors extension of $\chi$: 
\begin{equation}\label{ext1}
\chi_e(x+iy)=\frac{1}{2}\int_{-1}^{1} \chi(x+ty)(1+2i\sgn t)\,dt.
\end{equation}
This is a diffeomorphism of the upper halfplane $\HH$ onto itself~\cite{Ahb, BA}, which satisfies the following inequalities ~\cite[(4.3-5)]{Ko18}:
\begin{equation}\label{maxstr}
\| D\chi_e(x+iy) \| \le 2\,\frac{\chi(x+y)-\chi(x-y)}{y};
\end{equation}
\begin{equation}\label{minstr}
\| D\chi_e(x+iy)^{-1} \| \le \frac{4y}{(\chi(x+y)-\chi(x+\frac{y}2))\wedge  (\chi(x-\frac{y}2)-\chi(x-y))};
\end{equation}
\begin{equation}\label{imext}
|\im \chi_e(x+iy) - y| \le 4\pi.
\end{equation}
By construction, $\chi_e(z+2\pi)=\chi_e(z)+2\pi$ for all $z\in\HH$,  which allows us to define the desired map $\Psi\colon \DD\to \DD$ by
\begin{equation}\label{halfplane2disk}
\Psi(e^{iz}) = \exp(i \chi_e(z)),\quad  \Psi(0)=0.
\end{equation}
This is a diffeomorphism of the punctured disk $\DD\setminus \{0\}$ onto itself, and also a homeomorphism of $\DD$ onto $\DD$. Using~\eqref{imext} and the chain rule, we obtain
\begin{equation}\label{chainPsi}
\| D\Psi(e^{iz})\| \le e^{4\pi} \| D\chi_e(z)\|,\quad 
\| D\Psi(e^{iz})^{-1} \| \le e^{4\pi} \| D\chi_e(z)^{-1}\|.
\end{equation}
It remains to prove the estimates~\eqref{BAderest1} and~\eqref{BAderest2}.
Note that $\Psi(re^{i\theta}) = \exp(i \chi_e(\theta + i\delta))$ where $\delta = \log(1/r)$.

\textit{Proof of~\eqref{BAderest1}.} In view of~\eqref{chainPsi}, the estimate ~\eqref{maxstr} yields
\begin{equation}\label{dPsi1}
\| D\Psi(z)\|  \le 2e^{4\pi}\,  \frac{\chi(\theta+\delta)- \chi(\theta-\delta)}{\delta}.
\end{equation}
Suppose $r>e^{-\pi/6}$. Then $\gamma_1\cup \gamma_4\subset \gamma$ where $\gamma = \{e^{it}\colon |t-\theta|\le \pi/3\}$. Since $\psi$ is normalized, $\psi(\gamma)$ has length at most $4\pi/3$. Considering that the Euclidean distance between the endpoints of an arc of length $\alpha$  is $2\sin (\alpha/2)$, we find that  the ratio of arclength distance to Euclidean distance on $\psi(\gamma)$ is bounded above by
\[
\frac{\alpha}{2\sin \frac{\alpha}{2}}\bigg|_{\alpha=4\pi/3}
 = \frac{4\pi}{3\sqrt{3}}.
\]
Therefore, 
\begin{equation}\label{arcdist}
\dist(\psi(\gamma_1), \psi(\gamma_4)) \ge \frac{3\sqrt{3}}{4\pi} (\chi(\theta+\delta)- \chi(\theta-\delta)). 
\end{equation}
Inequality~\eqref{BAderest1} follows from~\eqref{dPsi1} and~\eqref{arcdist}.

Next consider the case $0<r\le e^{-\pi/6}$. By virtue of ~\eqref{step2pi},
\begin{equation}\label{14delta}
\chi(\theta+\delta)- \chi(\theta-\delta) \le 2\pi 
\left \lceil \frac{\delta}{\pi}\right\rceil \le 
2\delta + 2\pi \le 14\delta 
\end{equation}
where the last step uses $\delta\ge \pi/6$. From~\eqref{dPsi1} and~\eqref{14delta} it follows that  $\| D\Psi(z) \|  \le 28e^{4\pi}<10^7$ in this case.

\textit{Proof of~\eqref{BAderest2}.}
From ~\eqref{minstr} and ~\eqref{chainPsi} we have
\begin{equation}\label{dPsi2}
\| D\Psi(z)^{-1}\|   \le \frac{4 e^{4\pi} \delta }{(\chi(\theta+\delta)-\chi(\theta+\delta/2)) \wedge (\chi(\theta-\delta/2)-\chi(\theta-\delta))}.
\end{equation}
The denominator of ~\eqref{dPsi2} is the length of the shorter of the arcs $\psi(\gamma_2), \psi(\gamma_3)$. It is bounded from below by $\diam \psi(\gamma_2) \wedge \diam \psi(\gamma_3)$, which yields the first part of ~\eqref{BAderest2} (the restriction $r>e^{-4\pi}$ is immaterial here).

For small $r$, specifically $0<r\le e^{-4\pi}$, we require a uniform estimate. Since $\delta\ge 4\pi$, we can use the inequality
$\lfloor \delta/(4\pi)\rfloor \ge \delta/(8\pi)$ in~\eqref{large-distance}, obtaining 
\[
\chi(\theta+\delta)-\chi(\theta+\delta/2) \ge 
2\pi \left\lfloor \frac{\delta}{4\pi}\right \rfloor 
\ge 2\pi \frac{\delta}{8\pi} = \frac{\delta}{4}.
\]
The same bound holds for $\chi(\theta-\delta/2)-\chi(\theta-\delta)$. Thus, the right hand side of ~\eqref{dPsi2} is bounded by $16e^{4\pi}<10^7$. The proof of Proposition~\ref{homeodist} is complete.
\end{proof}

\section{Harmonic measure estimates and derivative bounds for conformal maps}\label{harmestsec}

This section collects several estimates from~\cite{Ko18} regarding the relation of harmonic  measure and the derivatives of conformal maps. Some of them require an adjustment of parameters, in which case a proof is given; others are restated here for completeness, with a reference to a proof in~\cite{Ko18}.  

Given a domain $\Omega\subset \overline{\mathbb C}$, a point $\zeta\in \Omega$, and a Borel set $E\subset \partial \Omega$, let $\omega(\zeta, E, \Omega)$ be the harmonic measure of $E$ with respect to $\zeta$. This measure is determined by the property that $u(\zeta) = \int_{\partial \Omega} u(w)\,d\omega(\zeta, \cdot, \Omega)$  for every continuous function $u$ on $\overline{\Omega}$ that is harmonic in $\Omega$. The basic properties of harmonic measure can be found in~\cite{Ranb}.

\begin{lemma}\label{BNcor}\cite[Corollary 2.2]{Ko18} Let $\Omega\subset \CC$ be a simply connected domain. Consider a point $\zeta \in \Omega$ and a subset $\Gamma\subset \partial \Omega$. Suppose that $\omega(\zeta,\Gamma,\Omega)\ge \epsilon>0$. Then 
\begin{equation}\label{BNcor1}
\dist(\zeta,\Gamma) \le \csc^2 \left(\frac{\pi \epsilon}{4}\right) \dist(\zeta,\partial\Omega);
\end{equation}
\begin{equation}\label{BNcor2}
\diam \Gamma \ge \tan^2 \left(\frac{\pi \epsilon}{4} \right) \dist(\zeta,\Gamma).
\end{equation}
\end{lemma}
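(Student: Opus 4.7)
My plan is to prove both inequalities by circular symmetrization (Beurling's projection theorem), reducing to an extremal radial-slit-disk configuration on which harmonic measure admits a closed-form evaluation via conformal mapping to the upper half-plane. The key explicit identity, obtained by composing a Möbius map, a square root to unfold the slit, and a further Möbius normalization, is
\[
\omega\bigl(0,\,\partial B(0,R),\,B(0,R)\setminus[-R,-s]\bigr)=\tfrac{4}{\pi}\arctan\sqrt{s/R},\qquad 0<s<R.
\]

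For \eqref{BNcor1}, translate so $\zeta=0$ and set $d=\dist(0,\Gamma)$, $\rho=\dist(0,\partial\Omega)$. Let $V$ be the connected component of $\Omega\cap B(0,d)$ that contains $0$. Since $\Gamma$ lies outside $B(0,d)$, every Brownian path from $0$ to $\Gamma$ inside $\Omega$ must first exit $V$ through $\partial B(0,d)$, so the maximum principle gives
\[
\omega(0,\Gamma,\Omega)\le \omega\bigl(0,\,\partial V\cap\partial B(0,d),\,V\bigr).
\]
The domain $V$ is simply connected, lies in $B(0,d)$, and has $\dist(0,\partial V)\le\rho$ because the nearest point of $\partial\Omega$ to $0$ lies in $\overline V$. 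Beurling's projection theorem identifies the extremal configuration for the right-hand side as the radial slit disk $B(0,d)\setminus[-d,-\rho]$, so by the displayed identity the right-hand side is at most $(4/\pi)\arctan\sqrt{\rho/d}$. Combining with $\omega\ge\epsilon$ yields $\tan^2(\pi\epsilon/4)\le\rho/d$, hence $d\le\cot^2(\pi\epsilon/4)\rho\le\csc^2(\pi\epsilon/4)\rho$, using $\csc^2=1+\cot^2$.

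For \eqref{BNcor2}, pick $w_0\in\Gamma$ closest to $\zeta$ so that $|\zeta-w_0|=d$, and observe $\Gamma\subset\overline{B(w_0,D)}$ with $D=\diam\Gamma$. Any path from $\zeta$ to $\Gamma$ inside $\Omega$ must cross $\partial B(w_0,D)$; letting $U$ be the component of $\Omega\setminus\overline{B(w_0,D)}$ containing $\zeta$, the same argument gives
\[
\omega(\zeta,\Gamma,\Omega)\le \omega\bigl(\zeta,\,\partial B(w_0,D)\cap\partial U,\,U\bigr).
\]
Apply Beurling in the exterior regime: the extremal is the complement of $\overline{B(w_0,D)}$ together with a radial ray from $\partial B$ emanating away from $\zeta$. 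Transporting this extremal by the inversion $z\mapsto 1/(z-w_0)$ reduces it to an interior slit-disk problem of the type handled in the displayed identity (with the slit now reaching the origin of the inverted picture), producing the bound $(4/\pi)\arctan\sqrt{D/d}$. Rearranging $\epsilon\le(4/\pi)\arctan\sqrt{D/d}$ gives $D\ge\tan^2(\pi\epsilon/4)d$.

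The main obstacle is rigorously justifying the Beurling symmetrization in the precise form required and verifying that $U$ in the second inequality is simply connected, which uses $w_0\in\partial\Omega$ to ensure $\overline{B(w_0,D)}\not\subset\Omega$; the explicit conformal maps and closed-form evaluations on the extremal slit disk are standard but require care with branch cuts. The appearance of $\csc^2$ rather than the sharper $\cot^2$ in \eqref{BNcor1} is a mild relaxation yielding a cleaner statement.
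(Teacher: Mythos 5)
The paper gives no proof of this lemma at all---it is quoted from \cite[Corollary 2.2]{Ko18}---and your strategy (cut $\Omega$ by a circle, pass to the component containing $\zeta$, then apply Beurling's projection theorem and the explicit evaluation $\omega(0,\partial B(0,R),B(0,R)\setminus[-R,-s])=\tfrac4\pi\arctan\sqrt{s/R}$) is precisely the Beurling--Nevanlinna route that the cited corollary rests on. The identity is correct, and so is the final algebra: $\epsilon\le\tfrac4\pi\arctan\sqrt{\rho/d}$ gives $d\le\cot^2(\pi\epsilon/4)\,\rho\le\csc^2(\pi\epsilon/4)\,\rho$, and likewise for the diameter bound.

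The genuine gap is in the step ``Beurling's projection theorem identifies the extremal configuration as $B(0,d)\setminus[-d,-\rho]$.'' The three properties you cite for $V$---simply connected, contained in $B(0,d)$, $\dist(0,\partial V)\le\rho$---do \emph{not} imply the bound $\tfrac4\pi\arctan\sqrt{\rho/d}$: the disk minus a tiny slit at distance $\rho$ from the origin has all three properties yet gives the outer circle harmonic measure arbitrarily close to $1$. What the projection theorem requires is that the obstacle $\overline{B(0,d)}\setminus V$ contain points of every modulus $r\in[\rho,d]$, so that its circular projection covers the whole segment $[-d,-\rho]$. This is exactly where simple connectivity of $\Omega$ enters: $\overline{\CC}\setminus\Omega$ is connected and contains both the nearest boundary point (modulus $\rho$) and $\Gamma$ (modulus $\ge d$), so $|z|$ takes all intermediate values on it. You never invoke the connectedness of the complement, and without it the symmetrization step fails. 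The same issue is more serious for \eqref{BNcor2}: there the dominating configuration needs a radial obstacle running from $\partial B(w_0,D)$ all the way to $\infty$, which forces $\infty\in\overline{\CC}\setminus\Omega$, i.e.\ $\Omega$ bounded. That hypothesis is absent from the statement and is genuinely needed---for $\Omega=\overline{\CC}\setminus[0,1]$ and $\Gamma=[0,1]$ one has $\omega(\zeta,\Gamma,\Omega)=1$ while $\diam\Gamma=1$ and $\dist(\zeta,\Gamma)$ is arbitrarily large, so \eqref{BNcor2} fails---hence your proof must use boundedness (harmless here, since the lemma is only applied to bounded Jordan domains), and the place it gets used is precisely the unexplained ``exterior Beurling'' step. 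Minor loose ends: dispose first of the trivial cases $\rho=d$ and $D\ge d$ (in the latter, $\zeta\in\overline{B(w_0,D)}$ and $U$ is undefined), and note that after inversion the evaluation point sits at modulus $D/d$ rather than at the center, so the displayed identity applies only after the M\"obius normalization you allude to.
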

 
In order to use Lemma~\ref{BNcor}, we need to estimate the harmonic measure of the arcs~\eqref{gammacurves} from below. 

\begin{lemma}\label{lowerharm} Using notation~\eqref{gammacurves}, we have
\begin{equation}\label{lowerharm1}
\omega(z, \gamma_j, \DD) \ge \frac{1}{30\pi} \quad \text{if $j=1,4$ and $e^{-\pi/6}\le |z|<1$}
\end{equation}
and
\begin{equation}\label{lowerharm2}
\omega(z, \gamma_j, \DD) \ge \frac{1}{112\pi} \quad \text{if $j=2,3$ and $e^{-4\pi}\le |z|<1$}.
\end{equation}
\end{lemma}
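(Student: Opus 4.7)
The plan is to reduce by symmetry and apply the Poisson kernel formula directly. By a rotation I may assume $\theta=0$, so $z=r=e^{-\delta}$; reflection across the real axis then gives $\omega(z,\gamma_1,\DD)=\omega(z,\gamma_4,\DD)$ and $\omega(z,\gamma_2,\DD)=\omega(z,\gamma_3,\DD)$, so it suffices to treat $\gamma_1$ and $\gamma_3$. Both bounds will follow from the identity
\[
\omega(r,\gamma,\DD)=\frac{1}{2\pi}\int_{\gamma} \frac{1-r^2}{(1-r)^2+4r\sin^2(t/2)}\,dt
\]
combined with elementary estimates on the integrand.

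For $\gamma_1=\{e^{it}:-2\delta\le t\le -\delta\}$ under the hypothesis $\delta\le\pi/6$, the parameter satisfies $|t|\le 2\delta$, so $4\sin^2(t/2)\le t^2\le 4\delta^2$ and $1-r\le\delta$; these bound the denominator above by $(1+4r)\delta^2\le 5\delta^2$. For the numerator, $1-r^2\ge 1-e^{-\delta}\ge\delta(1-\delta/2)\ge\delta/2$ since $\delta\le 1$. Integrating over the arc of length $\delta$ then produces a bound of order $1/\pi$ that comfortably exceeds $1/(30\pi)$.

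For $\gamma_3=\{e^{it}:\delta/2\le t\le\delta\}$ under the hypothesis $\delta\le 4\pi$, I would split into two cases. When $\delta\le 1$, the same near-boundary estimate gives $|e^{it}-r|^2\le 2\delta^2$ on the arc, $1-r^2\ge\delta/2$, and $|\gamma_3|=\delta/2$, yielding a constant lower bound of order $1/\pi$. When $1\le\delta\le 4\pi$, the point $r\le e^{-1}$ lies deep in $\DD$; the Poisson kernel is uniformly bounded below by $(1-r)/(1+r)\ge(e-1)/(e+1)$, and the arc length $\delta/2\ge 1/2$ (observe that $\gamma_3$ does not wrap around $\TT$, since $\delta/2\le 2\pi$) again delivers a constant lower bound. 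The spacing of the arcs in~\eqref{gammacurves} and the thresholds $\pi/6$, $4\pi$ are chosen precisely so that both regimes yield constant-order harmonic measure; the main obstacle is purely bookkeeping, to push the constants down to the specific numerical values $1/(30\pi)$ and $1/(112\pi)$ claimed.
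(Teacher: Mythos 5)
Your proposal is correct and follows essentially the same route as the paper: a direct lower bound on the Poisson kernel over the relevant arc (the paper handles the whole range $e^{-4\pi}<r<1$ in one step via $1-r<\delta<13(1-r)$ rather than splitting at $\delta=1$, and cites~\cite{Ko18} for~\eqref{lowerharm1}, which you instead prove directly). Your constants do close: the three cases give $\tfrac{1}{20\pi}$, $\tfrac{1}{16\pi}$, and $\tfrac{1}{4\pi}\cdot\tfrac{e-1}{e+1}$, all exceeding the claimed bounds.
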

\begin{proof}   Let $j\in \{2, 3\}$ and $\delta=\log(1/r)$. For $\zeta\in \gamma_j$ the Poisson kernel $P_z(\zeta)$ can be estimated from below as follows:
\[
P_z(\zeta) = \frac{1}{2\pi} \frac{1-|z|^2}{|\zeta-z|^2}\ge 
\frac{1}{2\pi} \frac{1-r}{|1 - r +\delta|^2}. 
\]
The convexity of the function $\phi(r) = \log(1/r)$, which has a tangent line $\psi(r) = 1-r$ at $r=1$, implies
\begin{equation}\label{logbounds2}
1 < \frac{\log(1/r)}{1-r} < \frac{\phi(e^{-4\pi})}{\psi(e^{-4\pi})} = 
\frac{4\pi}{1-e^{-4\pi}} < 13  \quad \text{for }\ e^{-4\pi} < r< 1.
\end{equation}
Using the inequalities $1-r < \delta < 13(1-r)$ we obtain $|1-r+\delta|^2 < 2\delta\cdot 14(1-r)$. This yields
$P_z(\zeta) \ge 1/( 56 \pi \delta)$. 
Since the length of  $\gamma_j$ is $\delta/2$, its harmonic measure is at least $1/(112\pi)$. This completes the proof of~\eqref{lowerharm2}.  

The proof of~\eqref{lowerharm1}, which is similar, can be found in~\cite[Lemma 2.3]{Ko18}.
\end{proof}
 
Our next step is to translate the information about harmonic measure into distortion estimates for conformal maps. 

\begin{lemma}\label{intconf}
 Let $\Omega\subset \CC$ be a domain bounded by a Jordan curve $\Gamma$. Fix a conformal map $\Phi $  of $\DD$ onto $\Omega$ and let $\phi\colon \TT\to\Gamma$ be the boundary map induced by $\Phi$. Also let $z\in\DD$ and $r=|z|$.  Then 
\begin{equation}\label{confderest1}
|\Phi'(z)| \ge  
\frac{\dist(\phi(\gamma_1), \phi(\gamma_4))}{60000\log(1/r)},\quad \text{if } e^{-\pi/6} \le r< 1, 
\end{equation}
\begin{equation}\label{confderest2}
| \Phi'(z) | \le 
2\cdot 10^7\,\dfrac{\diam \phi(\gamma_2) \wedge \diam \phi(\gamma_3)}{\log(1/r)},\quad \text{if }  e^{-4\pi} \le r< 1.
\end{equation}
where $\gamma_j$ is as in~\eqref{gammacurves}. 
\end{lemma}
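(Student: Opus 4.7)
The plan is to combine Lemma~\ref{lowerharm} (harmonic measure lower bounds for $\gamma_j$) with Lemma~\ref{BNcor} (Beurling--Nevanlinna type estimates) and the classical Koebe distortion inequalities
\[
\tfrac14 (1-|z|^2)|\Phi'(z)| \le \dist(\Phi(z),\partial\Omega) \le (1-|z|^2)|\Phi'(z)|.
\]
Since conformal maps preserve harmonic measure, $\omega(\Phi(z),\phi(\gamma_j),\Omega) = \omega(z,\gamma_j,\DD)$, so the hypotheses of Lemma~\ref{BNcor} are automatically met with the explicit constants from Lemma~\ref{lowerharm}.

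\emph{For~\eqref{confderest1}:} Take $\epsilon = 1/(30\pi)$, valid for $j=1,4$ when $r\ge e^{-\pi/6}$. Inequality~\eqref{BNcor1} yields
\[
\dist(\Phi(z),\phi(\gamma_j)) \le \csc^2(\pi\epsilon/4)\,\dist(\Phi(z),\partial\Omega),\qquad j=1,4.
\]
By the triangle inequality applied to closest points,
\[
\dist(\phi(\gamma_1),\phi(\gamma_4)) \le \dist(\Phi(z),\phi(\gamma_1)) + \dist(\Phi(z),\phi(\gamma_4)) \le 2\csc^2(\pi\epsilon/4)\,\dist(\Phi(z),\partial\Omega).
\]
The upper Koebe estimate and $1-|z|^2\le 2\log(1/|z|)$ then give
\[
\dist(\phi(\gamma_1),\phi(\gamma_4)) \le 4\csc^2(1/120)\,\log(1/|z|)\,|\Phi'(z)|.
\]
Since $\sin(1/120)>1/121$ one checks $4\csc^2(1/120)<60000$, yielding~\eqref{confderest1}.

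\emph{For~\eqref{confderest2}:} Take $\epsilon = 1/(112\pi)$, valid for $j=2,3$ when $r\ge e^{-4\pi}$. Inequality~\eqref{BNcor2} combined with $\dist(\Phi(z),\partial\Omega)\le \dist(\Phi(z),\phi(\gamma_j))$ gives
\[
\dist(\Phi(z),\partial\Omega) \le \cot^2(\pi\epsilon/4)\,\diam \phi(\gamma_j),\qquad j=2,3.
\]
By the lower Koebe estimate, $|\Phi'(z)| \le 4\dist(\Phi(z),\partial\Omega)/(1-|z|^2)$, and from~\eqref{logbounds2} we have $1/(1-|z|^2)\le 13/\log(1/|z|)$ in the range $e^{-4\pi}\le |z|<1$. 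Combining,
\[
|\Phi'(z)| \le 52\cot^2(1/448)\,\frac{\diam \phi(\gamma_j)}{\log(1/|z|)},\qquad j=2,3.
\]
Since $\cot(x)<1/x$ for $x\in(0,\pi/2)$, $52\cot^2(1/448)<52\cdot 448^2<2\cdot 10^7$, and taking the minimum over $j=2,3$ gives~\eqref{confderest2}.

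The mathematical content is a direct concatenation of estimates already in the paper plus the classical Koebe theorem; the only real bookkeeping is carrying the numerical constants through $\csc^2(1/120)$, $\cot^2(1/448)$, and the elementary comparison of $1-|z|^2$ with $\log(1/|z|)$ (trivial on one side, via~\eqref{logbounds2} on the other). The main thing to be careful about is the direction of each Koebe inequality: the \emph{upper} Koebe estimate on $\dist(\Phi(z),\partial\Omega)$ feeds the \emph{lower} bound on $|\Phi'(z)|$, and vice versa.
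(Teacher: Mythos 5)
Your proof is correct and follows essentially the same route as the paper: conformal invariance of harmonic measure feeds Lemma~\ref{lowerharm} into Lemma~\ref{BNcor}, and the two directions of the Koebe/Schwarz--Pick distortion estimates together with~\eqref{logbounds2} (resp. $1-r^2\le 2\log(1/r)$) convert $\dist(\Phi(z),\partial\Omega)$ into $\log(1/r)\,|\Phi'(z)|$; your constants $4\csc^2(1/120)<60000$ and $52\cot^2(1/448)<2\cdot10^7$ check out. The only difference is that the paper writes out just~\eqref{confderest2} and cites \cite[Lemma 3.1]{Ko18} for~\eqref{confderest1}, whereas you supply the latter argument in full (the triangle-inequality step through the two closest points is exactly the intended one).
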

  
\begin{proof} Let $\zeta = \Phi(z)$, $\rho = \dist(\zeta, \partial\Omega)$, and $\Gamma_j=\phi(\gamma_j)$. Applying the Koebe $1/4$-theorem to the disk centered at $z$ with radius $1-r$, we obtain 
$(1-r) |\Phi'(z)| \le 4\rho$, which  by~\eqref{logbounds2} implies
\begin{equation}\label{Koebe}
    |\Phi'(z)| \le\frac{52 \rho}{\log(1/r)}. 
\end{equation}
By the conformal invariance of harmonic measure, 
\begin{equation}\label{Koebe-new}
\omega(\zeta, \Gamma_j, \Omega)
= \omega(z, \gamma_j, \DD) \ge \frac{1}{112\pi},\quad j=2, 3,
\end{equation}
where the last step is based on~\eqref{lowerharm2}. 
From ~\eqref{BNcor2} and~\eqref{Koebe-new}  it follows that
\[
\diam \Gamma_2 \wedge \diam \Gamma_3 \ge  \tan^2 \left(\frac{1}{448}\right)\rho. 
\]
Hence 
\[\begin{split}
|\Phi'(z)|  &\le \frac{52 \rho}{\log(1/r)} \le 
 \frac{52 (\diam \Gamma_2 \wedge \diam \Gamma_3)}{\tan^2 \left(\frac{1}{448}\right)\log(1/r)} \\
&\le 2\cdot 10^7\, \frac{ \diam \Gamma_2 \wedge \diam \Gamma_3}{\log(1/r)}
\end{split}
\]
which proves~\eqref{confderest2}.

The proof of~\eqref{confderest1}, which is similar, can be found in ~\cite[Lemma 3.1]{Ko18}. 
\end{proof}

\section{Proof of Theorem~\ref{main-theorem}}\label{sec-main-theorem-proof}

We may and do assume that $f$ is sense-preserving. The Jordan curve $f(\TT)$ divides the plane into two domains, one of which, denoted $\Omega$, is bounded.

\textbf{Extension to the bounded component.}
Let $\Phi$ be a conformal map of $\DD$ onto $\Omega$. By Carath\'eodory's theorem, $\Phi$ extends to a homeomorphism between $\overline{\DD}$ and $\overline{\Omega}$. Let $\phi\colon \TT \to \partial\Omega $ be the induced boundary map. Define $\psi\colon \TT\to\TT$ by $\psi = f^{-1}\circ \phi$. By Lemma~\ref{normalize} there exists a M\"obius map $\mu\colon \DD\to\DD$ such that $\psi\circ \mu$ is normalized. By replacing $\Phi$ with $\Phi\circ \mu$ (and consequently $\phi$ with $\phi\circ \mu$) we ensure that $\psi$ itself is normalized.  

Let $\Psi\colon\DD\to\DD$ be the extension of $\psi$ provided by Lemma~\ref{homeodist}. The composition 
\begin{equation}\label{extension-F}
 F = \Phi\circ \Psi^{-1}   
\end{equation}
extends $f =\phi\circ \psi^{-1} $ to the unit disk. The assumption~\eqref{BLass} on $f$ takes the form
\begin{equation}\label{BLass1}
\ell \le    \frac{|\phi(a)-\phi(b)|}{|\psi(a)-\psi(b)|} \le L,\quad a, b\in\TT, \ a\ne b,
\end{equation}
which has implications for the images of the arcs defined by~\eqref{gammacurves}:
\begin{equation}\label{BLass-diam}
\frac{\diam \phi(\gamma_j) }{\diam  \psi(\gamma_j) }\le L,
\end{equation}
\begin{equation}\label{BLass-dist}
\frac{\dist(\phi(\gamma_1), \phi(\gamma_4))}{\dist(\psi(\gamma_1), \psi(\gamma_4))} \ge \ell. 
\end{equation}
The derivatives of $F$ will be estimated using the chain rule, which says that for $z\in\DD\setminus\{0\}$,
\begin{equation}\label{chain-derivative}
\|DF(z)\| = |\Phi'(z)| \|D\Psi(z)^{-1}\|, \quad   \|DF(z)^{-1}\| = |\Phi'(z)|^{-1} \|D\Psi(z)\|.
\end{equation}
The Koebe distortion theorem for conformal maps of the disk~\cite[Theorem 2.5]{Durb} states that
\begin{equation}\label{Koebedist1}
\frac{1-|z|}{(1+|z|)^3} |\Phi'(0)|  \le |\Phi'(z)| \le  \frac{1+|z|}{(1-|z|)^3} |\Phi'(0)|, \quad z\in\DD.
\end{equation}

\textit{A bound for $\|DF(z)\|$ when $e^{-4\pi} < |z| < 1$.} Combine~\eqref{BAderest2}, ~\eqref{confderest2}, ~\eqref{BLass-diam}, and ~\eqref{chain-derivative} to obtain
\begin{equation}\label{DF-1}
\|DF(z)\| \le 2\cdot 10^{14} L, \quad e^{-4\pi} < |z| < 1.  
\end{equation}

\textit{A bound for $\|DF(z)\|$ when $0<|z|\le e^{-4\pi}$.} 
By the Schwarz lemma, $|\Phi'(0)|\le \diam \Omega\le 2L$.  By virtue of~\eqref{Koebedist1} this implies 
\[
|\Phi'(z)| \le \frac{1+e^{-4\pi}}{(1-e^{-4\pi})^3}\,2L \le 3L.
\]
This together with~\eqref{BAderest2} and~\eqref{chain-derivative} imply
\begin{equation}\label{DF-2}
\|DF(z)\| \le 3\cdot 10^7 L, \quad 0<|z|\le e^{-4\pi}.
\end{equation}

\textit{A bound for $\|DF(z)^{-1}\|$ when $e^{-\pi/6} < |z| < 1$.}
Combine~\eqref{BAderest1}, ~\eqref{confderest1}, ~\eqref{BLass-dist}, and ~\eqref{chain-derivative} to obtain
\begin{equation}\label{DFinv-1}
\|DF(z)^{-1}\| \le 6\cdot 10^{11} \ell^{-1},\quad e^{-\pi/6} < |z| < 1.
\end{equation}

\textit{A bound for $\|DF(z)^{-1}\|$ when $0<|z|\le e^{-\pi/6}$.}
This case is more involved because a lower bound for $|\Phi'(0)|$ is not as readily available as an upper bound. We begin by partitioning $\TT$ into six arcs of length $\pi/3$. Since their images under $\psi$ cover $\TT$, at least one of them, denoted $\sigma$, must satisfy $|\psi(\sigma)|\ge \pi/3$. Let $z_0 = e^{-\pi/6} e^{i\theta}$ where $e^{i\theta}$ is the midpoint of $\sigma$.

Consider the arcs $\gamma_j$ associated with $z_0$ by~\eqref{gammacurves}.  
They are all contained in the arc $\sigma'$, which shares the midpoint with $\sigma$ and is twice its size, $|\sigma'|=2\pi/3$.
We need to estimate $\dist(\psi(\gamma_1), \psi(\gamma_4))$ from below. To this end, note that one of the two components of $\TT\setminus (\psi(\gamma_1)\cup \psi(\gamma_4))$ is the interior of $\psi(\sigma)$, and $\diam \psi(\sigma)\ge 1$. The other component is $\TT\setminus \psi(\sigma')$, which also has diameter at least $1$ because $|\psi(\sigma')|\le 4\pi /3$ by virtue of $\psi$ being normalized. Thus,
\[
\dist(\psi(\gamma_1), \psi(\gamma_4)) \ge 1.
\]
Recalling~\eqref{confderest1} and~\eqref{BLass-dist}, we arrive at 
\begin{equation}\label{10000pi}
|\Phi'(z_0)|\ge \frac{\ell}{60000 \log(1/|z_0|)} = \frac{\ell}{10000\pi}.     
\end{equation}
By the distortion theorem~\eqref{Koebedist1}, inequality~\eqref{10000pi} implies a similar lower bound for all $z$ with $|z|\le e^{-\pi/6}$.  
\begin{equation}\label{lower-bound}
|\Phi'(z)|\ge \left(\frac{1-e^{-\pi/6}}{1+e^{-\pi/6}} \right)^4 \frac{\ell}{10000\pi} \ge 10^{-7}\ell.     
\end{equation}
Finally, from~\eqref{BAderest1}, ~\eqref{chain-derivative} and~\eqref{lower-bound} we obtain
\begin{equation}\label{DFinv-2}
\|DF(z)^{-1}\| \le 10^{14}\ell^{-1}, \quad 0<|z|\le e^{-\pi/6}.
\end{equation}

To summarize, so far we have a homeomorphism $F$ between $\overline{\DD}$ and $\overline{\Omega}$ whose derivative is bounded by $2\cdot 10^{14}$ in $\DD\setminus\{0\}$ by virtue of~\eqref{DF-1} and~\eqref{DF-2}. This implies the upper Lipschitz bound 
\[
|F(a)-F(b)|\le 2\cdot 10^{14} |a-b|,\quad a,b\in\overline{\DD},
\]
since $a$ and $b$ can be joined by a segment that lies in $\DD$.

The Lipschitz property of $F^{-1}$ is less obvious because its domain $\overline{\Omega}$ need not be convex. If two points $a,b\in  \overline{\Omega}$ are such that the segment $[a, b]$ is  contained in $\Omega$, then the Lipschitz bound 
\[
|F^{-1}(a)-F^{-1}(b)|\le 10^{14}\ell^{-1}|a-b|
\]
follows from the derivative bounds~\eqref{DFinv-1} and ~\eqref{DFinv-2}. Otherwise, let $c, d$ be the first and last points at which this line segment meets $\partial \Omega$; that is $[a, c] \subset \overline{\Omega} $, $[d, b]\in \overline{\Omega}$, and $c,d\in \partial\Omega$. Since $F^{-1} = f^{-1}$ on $\partial\Omega$,  we have 
$|F^{-1}(c)-F^{-1}(d)|\le \ell^{-1}|c-d|$. By the triangle inequality, 
\[ \begin{split}
|F^{-1}(a)-F^{-1}(b)| & \le 10^{14}\ell^{-1}|a-c| + 
\ell^{-1}|c-d| + 10^{14}\ell^{-1}|d-b| 
\\ &\le 10^{14}\ell^{-1}|a-b|.
\end{split}\]
This completes the proof of~\eqref{main-conclusion}. 

\textbf{Extension to the unbounded component.}  Theorem~1.1 of~\cite{Ko18} yields the existence of a homeomorphic extension $\widetilde{F}\colon \CC\to\CC$ that satisfies~\eqref{main-conclusion2}. Strictly speaking, it is stated in the context of bi-Lipschitz maps, i.e., with $\ell,L\in (0, \infty)$. However, nothing prevents one from setting $\ell=0$ or $L=\infty$ in~\cite{Ko18} as long as $f$ is still assumed to be an embedding: the estimates that involve a degenerate constant become vacuously true, and the estimates that do not involve it are not affected. 

An extension that satisfies both~\eqref{main-conclusion} and~\eqref{main-conclusion2} is constructed by using  $\widetilde{F}$ in $\CC\setminus\overline{\DD}$ and $F$, as defined by~\eqref{extension-F}, in $\DD$. The estimates ~\eqref{main-conclusion2} continue to hold because $F$ satisfies them in $\DD$ by virtue of~\eqref{main-conclusion}.
This completes the proof of Theorem~\ref{main-theorem}.

\section{Examples, remarks, and questions} \label{sec:examples}

Given that bi-Lipschitz maps of a circle can be extended to the enclosed disk with linear estimates for both constants~\eqref{main-conclusion}, it is natural to expect the same for an extension to the exterior of the disk. This turns out to be false.   

\begin{example}\label{keyhole} Let $\epsilon>0$ be a small number and let $\Gamma$ be the union of the following concentric circular arcs:  
\[
\begin{split}
\sigma_1 &= \{e^{it}\colon \epsilon\le t\le 2\pi-\epsilon\} \\
\sigma_2 &= \{2e^{it}\colon \epsilon\le t\le \pi-\epsilon\} \\
\sigma_3 &= \{2e^{it}\colon \pi+\epsilon\le t\le 2\pi-\epsilon\} \\
\sigma_4 &= \{3e^{it}\colon -\pi+\epsilon\le t\le \pi-\epsilon\} 
\end{split}
\]
joined by radial line segments as in Figure~\ref{fig:keyhole}. 

\begin{figure}[h]
    \centering
    \includegraphics[width=0.6\textwidth]{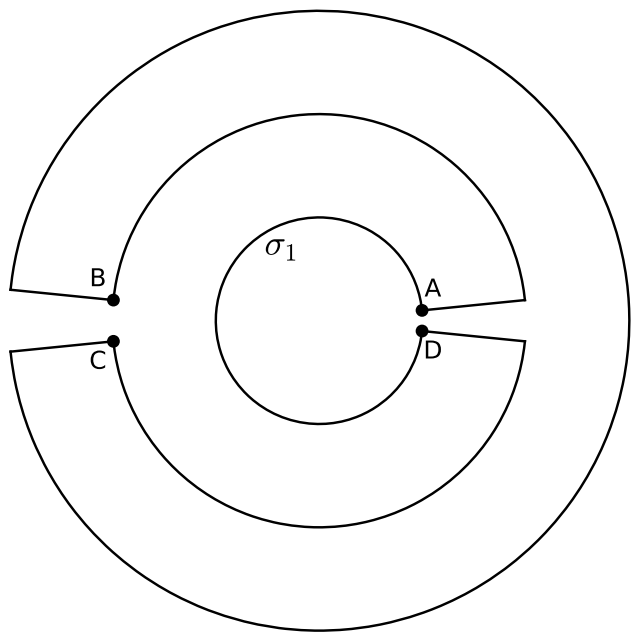}
    \caption{Impossibility of a linear lower Lipschitz bound}
    \label{fig:keyhole}
\end{figure}

Define $f\colon \TT\to\Gamma$ so that the restriction of $f$ to $\sigma_1$ is the identity map, and $f\colon \TT\setminus \sigma_1\to \Gamma\setminus\sigma_1$ is the constant-speed parameterization. Then $f$ satisfies the bi-Lipschitz condition~\eqref{BLass} with $L = C/\epsilon $ for some $C>0$ independent of $\epsilon$, and $\ell=1/2$. To justify the latter, note that $f$ does not contract any distances either on $\sigma_1$ or on $\TT\setminus\sigma_1$, and there is only moderate contraction between the points of $\sigma_1$ and their near-antipodes on $\TT$, from distance $\approx 2$ to distance $\ge 1$.  

Suppose that $F\colon \CC\to\CC$ is a homeomorphism extending $f$. Observe that the line segment $BC$ separates the arc $\sigma_1$ from $\infty$ in the unbounded component of $\CC\setminus \Gamma$. Therefore, its preimage $F^{-1}(BC)$ separates $\sigma_1$ from $\infty$ in the domain $\CC\setminus \overline{\DD}$. Since the endpoints of $F^{-1}(BC)$ lie on $\TT\setminus \sigma_1$, it follows that its length is at least $2\pi-2\epsilon$. On the other hand, the length of the segment $BC$ is less than $4\epsilon$. This shows that $\Lip(F^{-1})\ge (2\pi-2\epsilon)/(4\epsilon)$ which is unbounded as $\epsilon\to 0$, while $\Lip(f^{-1})\le 2$. 

Note that $L^2/\ell$ is of order  $1/\epsilon$ in this example. Thus, the left side of the inequality~\eqref{main-conclusion2} is sharp except for the numeric constant. 
\end{example}
 
Since Kirszbraun's theorem provides an extension preserving the Lipschitz constant, one may ask if the same can be achieved by homeomorphic extension of circle embeddings, at least within the enclosed disk. The answer is negative.  

\begin{example}\label{moon} Fix a number $a\in (0, 1/2)$ and define $f\colon \TT\to\CC$ by $f(x+iy) =\min(x, 2a-x) +iy$. This map reflects an arc of the unit circle about the vertical line $x=a$. Its image is the crescent shown in Figure~\ref{fig:moon}. 

The map $f$ is nonexpanding, i.e., $\Lip(f)=1$. Any homeomorphic extension $F\colon \overline{\DD}\to\overline{\DD}$ must send $0$ to a point $F(0)\ne 0$. Since $F(\pm i)=\pm i$, at least one of the distances $|F(i)-F(0)|$ and $|F(-i)-F(0)|$ exceeds $1$. Thus, $\Lip(F)>1$.

\begin{figure}[h]
    \centering
    \includegraphics[width=0.3\textwidth]{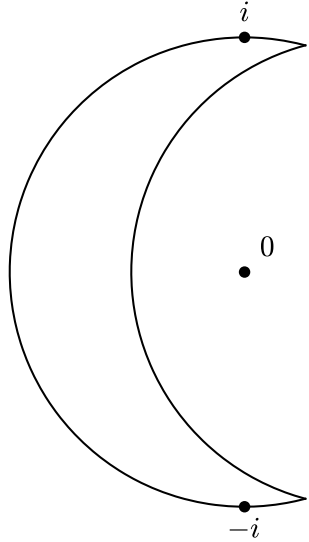}
    \caption{Impossibility of preserving the upper Lipschitz bound}
    \label{fig:moon}
\end{figure}
\end{example}
  
By letting $a\to 0$ in Example~\ref{moon} one can see that a bound of the form $\Lip(F)\le C\Lip(f)$ must have $C\ge \sqrt{2}$. The modest size of the factor $\sqrt{2}$ in this example is in stark contrast with the large factors in Theorem~\ref{main-theorem}. 

\begin{question} Can the inequality~\eqref{main-conclusion} be improved so that it has single-digit factors instead of 15-digit ones? For example, is there an extension that satisfies 
\begin{equation}\label{main-conclusion-better}
 \frac{\ell}{2} |a-b|\le  |F(a)-F(b)| \le 2  L |a-b|,\quad a,b \in\overline{\DD}?
\end{equation}
\end{question}

More reasonable estimates are available for the bi-Lipschitz constants of an extension of an embedding $f\colon\RR\to\CC$. 

\begin{theorem}\label{thm-line} ~\cite[Theorem 1.2]{Ko12} 
Suppose $f\colon\RR\to\CC$ is an embedding such that 
\begin{equation}\label{BLass-line}
\ell|a-b|\le  |f(a)-f(b)| \le L |a-b|,\quad a,b \in\RR 
\end{equation}
for some $\ell, L\in [0, \infty]$. Then $f$ extends to a homeomorphism $F\colon\CC\to\CC$ that  satisfies 
\begin{equation}\label{main-conclusion-line}
\frac{\ell}{120} |a-b|\le  |F(a)-F(b)| \le 2000 L |a-b|,\quad a,b \in\CC.
\end{equation}
\end{theorem}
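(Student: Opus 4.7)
The plan is to mirror the proof of Theorem~\ref{main-theorem}, replacing the disk $\DD$ by the upper half-plane $\HH$ and using the Beurling-Ahlfors extension~\eqref{ext1} applied directly to a normalized self-homeomorphism of $\RR$. When $\ell>0$, the image $\Gamma = f(\RR)$ is a proper Jordan arc; together with the point at infinity it bounds two simply connected domains $\Omega^\pm\subset\widehat{\CC}$, each conformally equivalent to $\HH$. The degenerate cases $\ell=0$ and $L=\infty$ are included as in Theorem~\ref{main-theorem}, since the corresponding estimates become vacuous.

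For each side, I would pick a conformal map $\Phi\colon\HH\to\Omega^\pm$ with Carath\'eodory boundary extension $\phi\colon\RR\to\Gamma$, form the real homeomorphism $\psi = f^{-1}\circ\phi$, and use the three real degrees of freedom in the M\"obius self-maps of $\HH$ to fix a normalization such as $\psi(-1)=-1$, $\psi(0)=0$, $\psi(1)=1$, playing the role of Lemma~\ref{normalize}. Applying~\eqref{ext1} to $\psi$ gives a self-diffeomorphism $\Psi\colon\HH\to\HH$, and $F = \Phi\circ\Psi^{-1}\colon\overline{\HH}\to\overline{\Omega^\pm}$ agrees with $f$ on $\RR$.

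The pointwise estimates then follow the same pattern as Lemmas~\ref{homeodist} and~\ref{intconf}. For $z=x+iy\in\HH$ I would use the probe intervals $\gamma_1=[x-2y,x-y]$, $\gamma_2=[x-y,x-y/2]$, $\gamma_3=[x+y/2,x+y]$, $\gamma_4=[x+y,x+2y]$, in analogy with~\eqref{gammacurves}. The scale invariance of $\HH$ makes $\omega(z,\gamma_j,\HH)$ equal to a universal positive constant for $j=1,\dots,4$, which is a cleaner analogue of Lemma~\ref{lowerharm}. Lemma~\ref{BNcor} then bounds $|\Phi'(z)|$ above and below in terms of $\diam\phi(\gamma_j)/y$ and $\dist(\phi(\gamma_1),\phi(\gamma_4))/y$, while~\eqref{maxstr} and~\eqref{minstr} give matching bounds on $\|D\Psi(z)\|$ and $\|D\Psi(z)^{-1}\|$ in terms of the corresponding quantities for $\psi(\gamma_j)$. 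The hypothesis~\eqref{BLass-line} converts these into factors of $L$ and $\ell$, and the chain rule~\eqref{chain-derivative} delivers $\|DF(z)\|\le CL$ and $\|DF(z)^{-1}\|\le C/\ell$ on each side. Globalization of the Lipschitz conclusion proceeds via the segment argument at the end of \S\ref{sec-main-theorem-proof}, using that $F=f$ and $F^{-1}=f^{-1}$ along $\Gamma$: any segment in $\CC$ splits into at most two subsegments each lying in one $\overline{\Omega^\pm}$, joined through a crossing point on $\Gamma$.

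The main obstacle is pinning down explicit small constants, which is precisely where this argument improves over Theorem~\ref{main-theorem}. On $\HH$ there is no wrap-around, so no normalization step comparable to Lemma~\ref{normalize} is needed, the exponential pullback~\eqref{chainPsi} is bypassed entirely (no factor of $e^{4\pi}$), and the scale-invariant Poisson kernel on $\HH$ replaces the $112\pi$ constant of~\eqref{lowerharm2} by something much smaller. The genuine delicacy is to verify that after the three-point M\"obius normalization the distortion of $\psi$ does not concentrate at infinity in a way that breaks~\eqref{maxstr} or~\eqref{minstr}; once this is checked, the constants $120$ and $2000$ fall out of a careful accounting.
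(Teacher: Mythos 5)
You should first note that the paper contains no proof of Theorem~\ref{thm-line}: it is quoted verbatim from \cite[Theorem 1.2]{Ko12}, with only a remark about the degenerate constants. So your conformal-welding argument on $\HH$ is necessarily an independent route, and its skeleton is reasonable for $0<\ell\le L<\infty$: on the line there is no wrap-around, so $\dist(\psi(\gamma_1),\psi(\gamma_4))=\psi(x+y)-\psi(x-y)$ exactly and no analogue of Lemma~\ref{normalize} is needed; your worry about distortion ``concentrating at infinity'' is moot because \eqref{maxstr}--\eqref{minstr} are local and scale-invariant, and the only normalization actually required is that $\Phi$ carry $\infty$ to the end of the arc so that $\psi$ is an increasing homeomorphism of $\RR$; and the factor $e^{4\pi}$ from \eqref{chainPsi} indeed disappears.

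There are, however, two genuine gaps. First, the constants $120$ and $2000$ \emph{are} the statement, and your sketch does not produce them. Running your scheme with the paper's own tools: the Koebe theorem gives $\rho/(4y)\le|\Phi'(z)|\le 4\rho/y$ with $\rho=\dist(\Phi(z),\Gamma)$; each $\gamma_j$ has harmonic measure $\pi^{-1}(\arctan 2-\arctan 1)\approx 0.102$ at $x+iy$; feeding this into Lemma~\ref{BNcor} and combining with \eqref{maxstr}--\eqref{minstr} yields roughly $\|DF(z)\|\le 2500\,L$ and $\|DF(z)^{-1}\|\le 2500\,\ell^{-1}$. You thus land near, but above, the claimed $2000L$ on one side and a factor of about $20$ short of $\ell/120$ on the other; ``careful accounting'' within this framework will not close that gap, and \cite{Ko12} evidently uses a sharper mechanism. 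Second, for $\ell=0$ your construction breaks at the very first step: the two complementary Jordan domains $\Omega^\pm$ exist only when $f$ is proper, which follows from $\ell>0$ but not otherwise. Indeed $f(t)=\arctan t$ is a Lipschitz embedding with $\ell=0$ whose image is not closed in $\CC$, so \emph{no} homeomorphic extension to $\CC$ exists; saying that the degenerate estimate ``becomes vacuous'' does not supply the homeomorphism, and you must add a properness hypothesis (the paper's own remark after the theorem is equally glib on this point). A minor further repair: for $\Lip(F^{-1})$ a segment can cross $\Gamma$ many times, so ``at most two subsegments'' is wrong as stated and you need the first-and-last-crossing argument from the end of \S\ref{sec-main-theorem-proof}.
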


It should be noted that \cite[Theorem 1.2]{Ko12} is stated in the context of bi-Lipschitz maps, i.e., $0<\ell,L<\infty$. But the argument presented in ~\cite{Ko12} works with $\ell=0$ or $L=\infty$ just as well, as long as $f$ is an embedding. 
  
Theorem~\ref{main-theorem} cannot be derived from Theorem~\ref{thm-line} by way of M\"obius conjugation, as such conjugation causes nonlinear growth of the bi-Lipschitz constants. For the same reason, neither theorem provides linear bounds for a bi-Lipschitz extension of an embedding $f\colon \TT\to S^2$ when the sphere $S^2$ is equpped with the spherical metric. 

\bigskip
\footnotesize
\noindent\textit{Acknowledgments.}
This research was supported by the National Science Foundation grants DMS-1362453 and DMS-1764266.

\bibliographystyle{amsplain}

\end{document}